\newcommand{\ave}[1]{ \left\{\!\!\left\{ {#1} \right\}\!\!\right\} }
\newcommand{\jump}[1]{ \left[ \! \left[ {#1} \right] \! \right] }
\newcommand{\bs}[1]{\boldsymbol{#1}}
\title{A symmetric boundary integral formulation\\ for time--domain acoustic-elastic scattering}
\titlerunning{Symmetric BIE for time--domain acoustic/elastic scattering}
\author{Tonatiuh S\'anchez-Vizuet
    \thanks{Partially funded by NSF grant NSF-DMS-2137305. }
    }
\institute{Tonatiuh S\'anchez-Vizuet
    \at Department of Mathematics, The University of Arizona, USA
        \email{tonatiuh@arizona.edu}
    }
\begin{document}

\maketitle

\begin{abstract}
A symmetric boundary integral formulation for the transient scattering of acoustic waves off homogeneous and isotropic elastic obstacles is analyzed. Both the acoustic scattered field and the elastodynamic excited field are represented through a direct integral representation, resulting in a coupled system of interior/exterior integral equations that is symmetrized through the introduction of an auxiliary mortar variable. The analysis of each system and of its Galerkin discretization is done through the passage to the Laplace domain, which allows for the use of convolution quadrature for time discretization. Since the operators of the acoustic and elastic Calder\'on calculus appear independently of each other, the formulation is well suited for non-intrusive numerical implementations (i.e. existing codes for acoustic and elastic problems can be used without any modification).
\keywords{Transient wave scattering \and elastodynamics \and time-dependent boundary integral equations \and convolution quadrature.}
\subclass{45A05 \and 65R20 \and 74J05 \and 74J20 \and 65M38.}
\end{abstract}

\section{The acoustic-elastic scattering problem}
%
\textbf{Problem setting in the time domain.} We will revisit the problem considered in \cite{HsSaSa2016}, namely that of an acoustic linear wave travelling in an unbounded homogeneous medium and scattering off a bounded elastic homogeneous and isotropic obstacle. The scatterer will be modelled as an open bounded domain $\Omega_-$, not necessarily simply connected (or even connected) with Lipschitz boundary $\Gamma$ and exterior unit normal vector $\bs \nu$. The unbounded acoustic domain, complementary to the closure of the elastic domain, will be denoted as $\Omega_+:=\mathbb R^d\setminus\overline\Omega_-$; in this study $d$ will be either $2$ or $3$.

The fluid, with constant density $\rho_f$, will be assumed to be homogeneous, isotropic and irrotational. The velocity field in the fluid, $\bs v^{tot}$, will be expressed as the superposition of the known incident perturbation $\bs v^{inc}$ and the unknown scattered field $\bs v$ so that $\bs v^{tot} = \bs v + \bs v^{inc}$. Due to the homogeneity and isotropy, perturbations will propagate with constant speed $c$ while, due to the assumption of irrotational flow, the velocity vector field $\bs v$ can be expressed in terms of a scalar potential as $\bs v = \nabla v$, \cite{ChMa1993}. The scalar \textit{velocity potential} $v$ will be our variable of interest and we will require it to be causal so that $v(t)=0$ for all $t\leq 0$, this requirement is akin to the assumption that, at the initial time, the incident wave has not yet incided on the obstacle.

In the elastic medium, perturbations from the reference configuration will be described in terms of the elastic displacement field $\bs u$ and will propagate with speeds depending on the density $\rho_\Sigma$, which will be assumed to be constant, and on the elastic properties of the material encoded in the fourth-order elastic stiffness tensor ${\bf C}$. The case of variable coefficients was considered in \cite{HsSaSa2016,HsSaSaWe:2018,HsSa2021} in conjunction with additional physical properties of the scatterer.

In the linear regime that we will be interested in, the stifness tensor relates the elastic stress tensor $\bs \sigma$ and the linearized strain tensor $\bs \varepsilon := \tfrac{1}{2}\left(\nabla\bs u + \nabla\bs u^\top\right)$ through the relation $\bs \sigma = \bf C\bs\varepsilon$. Whenever there could be any ambiguity regarding the displacement function associated to the stress and the strain, we will dispell it by indicating it explicitly in the form $\bs\varepsilon(\bs u)$ and $\bs\sigma(\bs u)$. 

In terms of its components, the action of $\bf C$ on a matrix $\bf M$ is given by
\[
\left(\bf C\bf M\right)_{ij} = \sum_{k,l} {\bf C}_{ijkl}{\bf M}_{kl}.
\]
We will require the components of $\bf C$ to be constants and to satisfy the symmetry relations \cite{MaHu1983}:
\[
{\bf C}_{ijkl} = {\bf C}_{klij} = {\bf C}_{jikl},
\]
and to be such that the stiffness tensor is positive in the following sense: for any symmetric matrix $\bf M$, there is a positive constant $c_0$ such that
\[
{\bf C\, M}:{\bf M}\geq c_0 {\bf M}:{\bf M}. 
\]
Above, the colon ``\,:\," denotes the Frobenius inner product of matrices ${\bf A}:{\bf B} :=\sum{i,j} {\bf A}_{ij}{\bf B}_{ij}$. 

In terms of the displacement field $\bs u$ and the velocity potential $v$, the equations governing the interaction are:
\begin{subequations}
\label{eq:TDequations}
\begin{alignat}{6}
\label{eq:TDequationsA}
-\nabla\cdot\bs\sigma + \rho_\Sigma\ddot{\bs u} =\,& \bs 0 &\qquad& \text{ in } \Omega_-\times[0,\infty),\\
\label{eq:TDequationsB}
-\Delta v + c^{-2}\ddot v =\,& 0 & \qquad & \text{ in } \Omega_+\times[0,\infty),\\
\label{eq:TDequationsC}
\bs\sigma\bs\nu + \rho_f\dot v \bs\nu =\,& -\rho_f\dot v^{inc}\bs\nu & \qquad & \text{ on } \Gamma \times [0,\infty),\\
\label{eq:TDequationsD}
\dot{\bs u}\cdot\bs\nu + \partial_\nu v =\,& -\partial_\nu v^{inc} & \qquad & \text{ on } \Gamma \times [0,\infty),\\
\label{eq:TDequationsE}
\bs u = \dot{\bs u} =\,& \bs 0 & \qquad & \text{ in } \Omega_-\times\{ 0 \},\\
\label{eq:TDequationsF}
 v = \dot v =\,& 0 & \qquad & \text{ in } \Omega_+\times\{ 0 \}.
\end{alignat}
\end{subequations}
Above, the dot is used to represent differentiation with respect to time. The first two equations are respectively the Navier--Lam\'e elastic wave equation and the acoustic wave equation, while the transmission condition \eqref{eq:TDequationsC} is the result of the equilibrium between the elastic and fluid normal forces along the contact surface, and condition \eqref{eq:TDequationsD} expresses the continuity of the normal velocity fields (i.e. both the elastic displacement and the acoustic velocity field have the same normal component at the interface so that no vaccum is created). Due to the choice of problem unknowns, the system above  is known as a ``velocity-displacement" formulation. An alternative formulation in terms of acoustic pressure and elastic displacement---known as pressure-displacement formulation---is described in \cite{Ihlenburg1998}.

\textbf{Sobolev space preliminaries and notation.} We will make use of standard results and notation from the theory of Sobolev spaces. We introduce the basic terminology here and refer the reader to classic references, such as \cite{AdFo2003} for further details. Let us start by defining the Sobolev spaces
\[
{\bf H} : = \left\{ \bs u\in {\bf H}^1(\mathbb R^d\setminus\Gamma): \nabla\cdot\bs\sigma\in{\bf L}^2(\mathbb R^d\setminus\Gamma)\right\} \qquad \text{ and } \qquad {\rm H}:= \left\{ v \in {\rm H}^1(\mathbb R^d\setminus\Gamma): \Delta v \in{\rm L}^2(\mathbb R^d\setminus\Gamma)\right\}.
\]
The reader may have noticed that the spaces above do not consider time derivatives of the acoustic and elastic variables. The reason for this is that the system will be transformed into the Laplace domain, where time differentiation is represented by a simple multiplication. This greatly simplifies the analysis and will enable us to sidestep the analysis of a hyperbolic equation in favor of a, more familar, elliptic equation.

For functions in these spaces, the trace operator $\gamma^{\pm}$ that restricts the values to the boundary $\Gamma$ is well defined. The superscript ``$+$" or ``$-$" on $\gamma$ will depend on whether the trace is being taken from values on $\Omega_+$ or $\Omega_-$. The trace spaces are denoted as ${\bf H}^{1/2}(\Gamma)$ for the elastic variables and ${\rm H}^{1/2}(\Gamma)$ for the acoustic ones. The duals of these spaces, denoted respectively as ${\bf H}^{-1/2}(\Gamma)$ and ${\rm H}^{-1/2}(\Gamma)$, coincide with the spaces of conormal derivatives defined respectively through the integration by parts formulas
\begin{align*}
\mp\left\langle\bs\sigma(\bs u)\bs\nu^\pm,\gamma^\pm \bs w\right\rangle_\Gamma :=\,& (\nabla\cdot\bs\sigma(\bs u),\bs w)_{\Omega_\pm} + (\bs\sigma(\bs u),\bs\varepsilon(\bs w))_{\Omega_\pm} \,,\\
\mp\left\langle\partial_\nu^{\pm}v,\gamma^\pm w\right\rangle_\Gamma :=\,& (\Delta v,w)_{\Omega_\pm} + (\nabla v,\nabla w)_{\Omega_\pm}\,.
\end{align*}
for functions $\bs u,\bs w \in {\bf H}$ and $v, w \in {\rm H}$. We will also make use of the jump and average traces
\[
\jump{\gamma v} : = \gamma^-v - \gamma^+v \qquad \text{ and } \qquad \ave{\gamma v} : = \tfrac{1}{2}\left(\gamma^-v + \gamma^+v\right),
\]
which are defined in an analogous manner for vector-valued functions and co-normal derivatives.  The action of the normal vector ${\bs \nu}$ as a mapping from the Dirichlet trace space into the Neumann trace space will be captured by the operators
\[
\begin{array}{rcl c rcl}
{\bf N} : {\bf H}^{1/2}(\Gamma) & \; \longrightarrow \; & {\rm H}^{-1/2}(\Gamma) & \qquad \text{ and } \qquad \qquad & {\bf N}^\prime : {\rm H}^{1/2}(\Gamma) & \; \longrightarrow \; & {\bf H}^{-1/2}(\Gamma)\\
 {\bs \varphi} & \longmapsto & {\bs \varphi}\cdot{\bs \nu} & & \varphi & \longmapsto & \varphi\,{\bs\nu}.
\end{array} 
\]

\textbf{Passage to the Laplace domain.} The system \eqref{eq:TDequations} can be riguourously understood in terms of causal distributions taking values in the Sobolev space $\bf H\times\rm H$. When considered as a Sobolev--space valued dynamical system, \eqref{eq:TDequations} can be analyzed directly in the time domain as was done in \cite[Section 7.1]{BrSaSa2018} extending the techniques developed in \cite{HaQiSaSa2015}. However, we will instead transform it into the Laplace domain and, following the seminal work of Laliena and Sayas \cite{LaSa2009}, recast it in terms of boundary integral equations (BIEs), analyze the resulting Laplace domain system, and translate the Laplace--domain estimates into time domain results at the end. This approach is advantageous both from the analytical and computational point of view. In the Laplace domain we can make use of the rich theory of elliptic boundary value problems to establish the well posedness of the problem and regularity requirements for problem data. From the computational point of view, the Laplace--domain integral equations are tamer than their time--domain counterparts. Moreover, their semidiscretization in space can be treated with efficient frequency--domain tools and paired with Lubich's convolution quadrature \cite{Lubich1988,Lubich2004,LuSc1992} to yield stable and accurate fully discrete systems. 

In the interest of brevity, we will not delve in the intricacies of Banach--space valued distributions and their Laplace transforms. Let us just state that the process of Laplace transforming the distributional interpretation of the system \eqref{eq:TDequations} is well defined, and that the Laplace transform of distributions of this kind behaves---at least formally---just like that of functions, so that all the forthcoming manipulations can be fully justified riguourously---the details can be found in \cite{Sayas2016}. Moreover, since in what follows we will almost exclusively be working in the Laplace domain, we will abuse notation and work under the convention that \textit{the Laplace transform of a time--domain function will be represented with the same symbol as its time--domain counterpart}. Whether an argument is being made in the time domain or in the Laplace domain will be clear from the context---for instance by the presence of the Laplace parameter $s$. Hence, the reader is hereby forewarned that, \textbf{\textit{until further notice, everything that follows should be understood as taking place in the Laplace domain}}.

From now on, and for all $s\in\mathbb C_+ :=\left\{s\in\mathbb C: {\rm Re}\, s >0 \right\}$, we will be considering then the problem of finding $(\bs u, v)\in {\bf H}^1(\Omega_-)\times \rm H^1(\Omega_-)$ such that
\begin{subequations}
\label{eq:LDequations}
\begin{alignat}{6}
\label{eq:LDequationsA}
-\nabla\cdot\bs\sigma + \rho_\Sigma s^2{\bs u} =\,& \bs 0 &\qquad& \text{ in } \Omega_-,\\
\label{eq:LDequationsB}
-\Delta v + (s/c)^{2} v =\,& 0 & \qquad & \text{ in } \Omega_+,\\
\label{eq:LDequationsC}
\bs\sigma\bs\nu^- + \rho_f s \gamma^+v \bs\nu =\,& -\rho_f s \phi_0\bs\nu & \qquad & \text{ on } \Gamma ,\\
\label{eq:LDequationsD}
s{\bs u}\cdot\bs\nu^- + \partial_\nu^+ v =\,& -\lambda_0 & \qquad & \text{ on } \Gamma,
\end{alignat}
\end{subequations}
where $\phi_0:=\gamma^+v^{inc}\in{\rm H}^{1/2}(\Gamma)$ and $\lambda_0:=\partial_{\nu}^+v^{inc}\in{\rm H}^{-1/2}(\Gamma)$. An important observation is that, since the elastic and acoustic waves are suported respectively on $\Omega_-$ and $\Omega_+$, their Dirichlet and Neumann traces \textit{from the opposite side of the inteface} must vanish:

\begin{subequations}
\label{eq:VanishingTraces}
\noindent\begin{tabularx}{\textwidth}{@{}XX@{}}
  \begin{equation}
  \label{eq:VanishingTracesA}
\gamma^+\bs u = \bs 0\, , 
  \end{equation} &
  \begin{equation}
  \label{eq:VanishingTracesB}
\bs\sigma\bs\nu^+ = \bs 0\, ,
  \end{equation} \\[-.8cm]
    \begin{equation}
  \label{eq:VanishingTracesC}
\gamma^- v = 0\,, 
  \end{equation} &
  \begin{equation}
  \label{eq:VanishingTracesD}
\partial_\nu^-v = 0. 
  \end{equation}
\end{tabularx}
\end{subequations}
The equalities above are simply the mathematical statement of the physically intuitive fact that the elastic field does not extend outside the scatterer, while the acoustic field does not extend inside of it. This observation will be useful when recasting the problem in terms of boundary integral equations, which we will do in the following section.
%
\section{Reformulation as boundary integral equations}
%
\textbf{Direct integral representations.} We will use standard concepts and notation from BIEs as in the standard references \cite{HsWe2021,McLean2000}. A function defined in terms of the single layer potential $\mathrm S_f(s)$ and double layer potential $\mathrm D_f(s)$ as
\begin{equation}
\label{eq:IntegralRepresentationF}
v = \begin{cases} 0 & \text{ in }\; \Omega_- \\ \mathrm D_f(s)\gamma^+ v - \mathrm S_f(s)\partial_\nu^+ v & \text{ in }\; \Omega_+  \end{cases}
\end{equation}
will be the unique solution to the following exterior problem for the acoustic resolvent equation
\[
-\Delta v + (s/c)^2v = 0  \qquad  \text{ in } \; \Omega_+,
\]
whose exterior Dirichlet and Neumann traces are equal to $\gamma^+v$ and $\partial_\nu^+v$ respectively. Hence, we will propose an ansatz for $v$ in the form of \eqref{eq:IntegralRepresentationF} and use the conditions at the interface to determine the unknown traces $\gamma^+v$ and $\partial_\nu^+v$. To do that we will need to introduce the following boundary integral operators acting on densities $(\lambda_f,\phi_f)\in \rm H^{-1/2}(\Gamma)\times \rm H^{1/2}(\Gamma)$

\scalebox{1}{\(\displaystyle
\begin{array}{clccl}
\mathrm V_f(s)\lambda_f := \ave{\gamma \mathrm S_f(s)\lambda_f} = \gamma\mathrm S_f(s)\lambda_f & \quad\;\; \text{\small (Single layer)}\,, & \qquad \qquad & \mathrm K_f(s)\phi_f := \ave{\gamma\mathrm D_f(s)\phi_f}  & \quad\;\;\text{\small (Double layer)}\,, \\[1.5ex]
\mathrm K_f^\prime(s)\lambda_f := \ave{\partial_\nu\mathrm S_f(s)\lambda_f} &\quad\;\; \text{\small (Weakly singular)}\,, & \qquad \qquad & \mathrm W_f(s)\phi_f :=\ave{\partial_\nu\mathrm D_f(s)\phi_f} &\quad\;\;\text{\small (Hypersingular)}\,,
\end{array}
\)}

which satisfy the useful identities

\begin{subequations}
\label{eq:TraceIDs}
\noindent\begin{tabularx}{\textwidth}{@{}XX@{}}
  \begin{equation}
  \label{eq:TraceIDsA}
    \partial_\nu^{\pm}\mathrm S_f(s) = \mp \tfrac{1}{2} {\rm I} + \mathrm K_f^\prime(s)\,,  
  \end{equation} &
  \begin{equation}
  \label{eq:TraceIDsB}
\gamma^\pm \mathrm D_f(s) = \pm \tfrac{1}{2} {\rm I} + \mathrm K_f(s)\,. 
  \end{equation}
\end{tabularx}
\end{subequations}
Similar definitions can be made \cite{HsSaWe2022} for operators related to the interior elastic resolvent equation
\[
-\nabla\cdot\bs\sigma + (s\rho_\Sigma)^2\bs u = \bs 0  \qquad  \text{ in } \; \Omega_-,
\]
where the function $\boldsymbol u$ is extended by zero in $\Omega_-$ . We will denote the resulting layer potentials and operators with the same symbols as their acoustic counterparts, and will distinguish them by the use of bold font and the subscript $\Sigma$. The corresponding vector--valued densities $(\bs\phi_\Sigma,\bs\lambda_\Sigma)$ belong to the spaces $\mathbf {\bf H}^{-1/2}(\Gamma)\times {\bf H}^{1/2}(\Gamma)$.

We therefore propose an ansatz for $\bs u$ of the form
\begin{equation}
\label{eq:IntegralRepresentationS}
\bs u = \begin{cases} \bs{\mathrm S}_\Sigma(s)\bs\sigma\bs\nu^- - \bs{\mathrm D}_\Sigma(s)\gamma^-\bs u & \text{ in } \Omega_- \\ \boldsymbol 0 & \text{ in } \Omega_+ \end{cases}\,.
\end{equation}
Note that the extensions by zero of $v$ and $\boldsymbol u$ (into $\Omega_-$ and $\Omega_+$ respectively) guarantee that the conditions \eqref{eq:VanishingTraces} will be satisfied.

\noindent\textbf{A symmetric system of boundary integral equations.} The boundary integral operators and the identities \eqref{eq:TraceIDs} can be applied to the representations \eqref{eq:IntegralRepresentationF} and \eqref{eq:IntegralRepresentationS} to express the transmission conditions \eqref{eq:LDequationsC} and \eqref{eq:LDequationsD}, along with the vanishing trace conditions \eqref{eq:VanishingTracesA} and \eqref{eq:VanishingTracesD}, in terms of the unknown densities. This leads to a system of boundary integral equations for the densities $\partial_\nu^+ v, \gamma^+ v,\gamma^-\bs u$, and $\bs{\sigma\nu}^-$. To enforce the symmetry of the ensuing system, we introduce the auxiliary variable
\begin{equation}\label{eq:auxvar}
\eta = \gamma^+ v,
\end{equation}
and divide equations \eqref{eq:LDequationsC} and \eqref{eq:VanishingTracesA} by the fluid density $\rho_f$. All these ingredients together lead to:
\begin{equation}
\label{eq:BIE}
\left[\begin{matrix}
\mathrm W_f(s) & \phantom{,} & \left[\tfrac{1}{2}{\rm I} + \mathrm K_f^\prime(s)\right]  & \phantom{,} & 0 & \phantom{,} & 0 & \phantom{,} & 0 & \phantom{,}\\[1.1ex]
\left[\tfrac{1}{2} {\rm I} + \mathrm K_f(s)\right] & \phantom{,} & -{\rm V}_f(s) & \phantom{,} & 0 & \phantom{,} & 0 & \phantom{,} & -\rm I & \phantom{,}\\[1.1ex]
0 & \phantom{,} & 0 & \phantom{,} & \rho_f^{-1}\bs{\mathrm W}_{\Sigma}(s) & \phantom{,} & \rho_f^{-1}\left[\tfrac{1}{2}{\bf I} + \bs{\mathrm K}^\prime_{\Sigma}(s)\right] & \phantom{,} & s\bf N^\prime& \phantom{,}\\[1.1ex]
0 & \phantom{,} & 0 & \phantom{,} & \rho_f^{-1}\left[\tfrac{1}{2}\bf I + \bs{\mathrm K}_\Sigma(s)\right]  & \phantom{,} & -\rho_f^{-1}\bs{\mathrm V}_\Sigma(s) & \phantom{,} & 0 & \phantom{,}\\[1.1ex]
0 & \phantom{,} & -\rm I & \phantom{,} & s\bf N & \phantom{,} & 0 & \phantom{,} & 0 & \phantom{,}
\end{matrix}\right]
\begin{bmatrix}
\gamma^+ v \\[1.1ex]
\partial_\nu^+ v \\[1.1ex]
\gamma^-\bs u\\[1.1ex]
\bs{\sigma\nu}^- \\[1.1ex]
\eta
\end{bmatrix} = 
\begin{bmatrix}
0 \\[1.1ex]
0 \\[1.1ex]
-s{\bf N}^\prime\phi_0 \\[1.1ex]
\bs 0 \\[1.1ex]
-\lambda_0
\end{bmatrix}.
\end{equation}
Above, we have denoted the normal operator mapping acoustic to elastic trace spaces by
\begin{alignat*}{8}
\mathbf N: \mathbf H^{1/2}(\Gamma) \;&&\longrightarrow & \; \rm H^{1/2}(\Gamma)\,, \qquad \qquad& \mathbf N^\prime: \rm H^{1/2}(\Gamma) \; && \longrightarrow &\; \mathbf H^{1/2}(\Gamma)\,,\\
\boldsymbol\phi \;\;&& \longmapsto &\;\; \boldsymbol\phi\cdot\boldsymbol\nu & \phi \;\;&&\longmapsto &\;\; \phi\,\boldsymbol\nu\,.
\end{alignat*}
Regarding the rows of the system, the first and fourth correspond to the vanishing trace conditions \eqref{eq:VanishingTracesD} and \eqref{eq:VanishingTracesB} respectively; the third and fifth come from the transmission conditions \eqref{eq:LDequationsD} and \eqref{eq:LDequationsC} respectively; while the the second one is nothing but the definition of the mortar variable \eqref{eq:auxvar}.
Once the densities $\partial_\nu^+ v, \gamma^+ v,\bs{\sigma\nu}^-,\gamma^-\bs u$, and $\eta$ have been determined as solutions of the system above, the acoustic and elastic wavefields are then recovered from the integral representation formulas \eqref{eq:IntegralRepresentationF} and \eqref{eq:IntegralRepresentationS}, which guarantee that the functions indeed satisfy the PDEs. Alternatively, given a solution pair $(\bs u, v)$ to the PDE system \eqref{eq:LDequations}, the properties of the layer potentials guarantee the existence of  integral representations of the form \eqref{eq:IntegralRepresentationF}  and \eqref{eq:IntegralRepresentationS}. Hence, using the definitions of the boundary integral operators and the trace identities \eqref{eq:TraceIDs} it is easy to show that the densities $\gamma^+ v,\partial_\nu^+ v,\gamma^-\bs u,\bs{\sigma\nu}^-$, and $\eta$ solve the BIE system \eqref{eq:BIE}. We have thus proven the following
\begin{proposition}
\label{prop:equivalence}
The system of partial differential equations \eqref{eq:LDequations} and the system of boundary integral equations \eqref{eq:BIE} are equivalent in the sense that a solution of one completely determines a solution of the other one.
\end{proposition}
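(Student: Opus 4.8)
The statement is an equivalence, so the plan is to establish the two implications separately, in each direction constructing from a solution of one system an explicit solution of the other.

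For the implication \eqref{eq:LDequations} $\Rightarrow$ \eqref{eq:BIE}, I would begin with a solution $(\bs u,v)$ and extend $v$ by zero into $\Omega_-$ and $\bs u$ by zero into $\Omega_+$. Since $\mathrm{Re}\,s>0$, the resolvent equations \eqref{eq:LDequationsA}--\eqref{eq:LDequationsB} are coercive and their solutions decay at infinity, so Green's third identity applies and returns precisely the direct representations \eqref{eq:IntegralRepresentationF} and \eqref{eq:IntegralRepresentationS}, carrying the Cauchy data $\gamma^+v,\partial_\nu^+v$ and $\gamma^-\bs u,\bs{\sigma\nu}^-$ as densities. The extensions by zero are exactly what make the opposite-side traces vanish, which is the content of \eqref{eq:VanishingTraces}. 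I would then apply the boundary integral operators and the trace identities \eqref{eq:TraceIDs}, together with their elastic counterparts, to the two representations: each such identity turns one of the remaining interface relations --- the transmission conditions \eqref{eq:LDequationsC}--\eqref{eq:LDequationsD}, the vanishing traces \eqref{eq:VanishingTracesA} and \eqref{eq:VanishingTracesD}, and the definition \eqref{eq:auxvar} of the mortar unknown $\eta$ --- into one row of \eqref{eq:BIE}. Dividing \eqref{eq:LDequationsC} and \eqref{eq:VanishingTracesA} by $\rho_f$ and substituting $\eta$ for $\gamma^+v$ produces the symmetric scaling displayed in the matrix.

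For the converse, given densities $(\gamma^+v,\partial_\nu^+v,\gamma^-\bs u,\bs{\sigma\nu}^-,\eta)$ that solve \eqref{eq:BIE}, I would define the fields $v$ and $\bs u$ through the same formulas \eqref{eq:IntegralRepresentationF} and \eqref{eq:IntegralRepresentationS}. Because layer potentials built from arbitrary densities are annihilated by the corresponding resolvent operator away from $\Gamma$, the constructed fields satisfy \eqref{eq:LDequationsA}--\eqref{eq:LDequationsB} with no further work. The task is to recover the four interface conditions, which I would do by computing the one-sided traces of the constructed fields via \eqref{eq:TraceIDs} and the elastic jump relations, writing them as operators acting on the densities, and matching them against the rows of \eqref{eq:BIE}: the first and fourth rows force the traces taken from the side on which each field was extended by zero to vanish, the second row identifies $\eta$ with $\gamma^+v$, and the third and fifth rows reproduce the two transmission conditions.

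The genuinely delicate point, and the one I would treat with most care, is this converse bookkeeping. In a direct formulation the symbols $\gamma^+v$, $\bs{\sigma\nu}^-$ and the like are a priori only labels for unknown densities, so one must check that once \eqref{eq:BIE} holds they truly coincide with the traces of the reconstructed fields. This is where the vanishing-trace rows and the mortar equation do the work: through the jump relations, the vanishing of the opposite-side Cauchy data is equivalent to the densities being the actual one-sided traces, so the representations become self-consistent. I do not expect to need any injectivity or uniqueness argument for the equivalence itself, as the claim concerns only the correspondence of solutions; the fact that $\Omega_-$ may be disconnected or multiply connected bears on uniqueness but not on this statement. The remaining vigilance is purely in tracking the sign conventions of the elastic Calder\'on operators so that each interface condition is recovered with the correct sign.
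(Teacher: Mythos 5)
Your proposal is correct and follows essentially the same route as the paper: both directions are handled through the direct representations \eqref{eq:IntegralRepresentationF}--\eqref{eq:IntegralRepresentationS} combined with the jump/trace identities \eqref{eq:TraceIDs}, which is exactly the (much terser) argument the paper gives in the paragraph preceding the proposition. Your additional bookkeeping in the converse direction---using the vanishing-trace rows, the mortar row, and the jump relations to identify the raw densities with the traces of the reconstructed fields, so that no uniqueness argument is required---is a correct refinement of what the paper leaves implicit.
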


%
\section{Continuous well--posedness analysis}
%
Let us define the product space for the unknowns of the strong system \eqref{eq:BIE} by 
\[
\mathbb H : = {\rm H}^{1/2}(\Gamma)\times {\rm H}^{-1/2}(\Gamma)\times {\bf H}^{1/2}(\Gamma)\times {\bf H}^{-1/2}(\Gamma)\times {\rm H}^{1/2}(\Gamma)\,,
\]
and its dual space by $\mathbb H^\prime$. We will also consider the following \textit{closed subspaces} of the trace spaces
\[
{\bf Y}_h \subset {\bf H}^{1/2}(\Gamma), \;\; {\bf X}_h \subset {\bf H}^{-1/2}(\Gamma), \;\; {\rm Y}_h \subset {\rm H}^{1/2}(\Gamma) \;\; \text{ and } \;\; {\rm X}_h \subset {\rm H}^{-1/2}(\Gamma).
\]
The discrete analogues of $\mathbb H$ and $\mathbb H^\prime$ will be duly denoted $\mathbb H_h$ and $\mathbb H^\prime_h$ respectively. We will define the polar set of ${\bf Y}_h$ as
\[
{\bf Y}_h^\circ :=\{{\bs \lambda}\in {\bf X}_h : \langle{\bs \lambda}, {\bs \varphi}\rangle_\Gamma = 0 \;\; \forall{\bs \varphi}\in {\bf Y}_h\}.
\]
Analogous definitions can be made for the polar sets of all the remaining spaces above, and we will use the notation ${\bf X}^\circ_h,{\rm Y}^\circ_h$, and ${\rm X}^\circ_h$ to denote them. For densities $(\phi,\lambda,\bs\phi,\bs\lambda,\eta)\in \mathbb H$ and  $(\widetilde\phi,\widetilde\lambda,\widetilde{\bs\phi},\widetilde{\bs\lambda},\widetilde\eta)\in \mathbb H'$ we define the product duality pairing as
\[
\langle(\phi,\lambda,\bs\phi,\bs\lambda,\eta),(\widetilde\phi,\widetilde\lambda,\widetilde{\bs\phi},\widetilde{\bs\lambda},\widetilde\eta)\rangle_\Gamma : = \langle\phi,\widetilde\phi\rangle_\Gamma + \langle\lambda,\widetilde\lambda \rangle_\Gamma + \langle\bs\phi,\widetilde{\bs\phi}\rangle_\Gamma + \langle\bs\lambda,\widetilde{\bs\lambda} \rangle_\Gamma + \langle\eta,\widetilde{\eta} \rangle_\Gamma.
\]

Finally, if $\mathbb M$ is a $n\times n$ matrix, we will keep the matrix-vector product notation compact by defining
\[
\mathbb M(a_1,\ldots,a_n) : = (\mathbb M_{11}a_1 + \ldots + \mathbb M_{1n}a_n,\,\ldots\,,\mathbb M_{n1}a_1 + \ldots + \mathbb M_{nn}a_n) = {\small
\left(\mathbb M \begin{bmatrix} a_1 \\[-.65ex] \vdots \\[-.65ex] a_n\end{bmatrix} \right)^\top}.
\]
Note that, in order to keep the notation as light as possible, we will avoid the traditional use of ``$h$" as a subscript (or superscript) for the discretized variables. However, if we can prove that this discretized weak problem is well posed, then by taking $\mathbb H_h = \mathbb H$ we obtain the well posedness of the continuous problem as a consequence.

\noindent\textbf{A weak system of BIEs.} We are now in the position to start the analysis and we will start by posing the system of BIEs in weak form. Consider the matrix of operators 
\[
\mathbb M(s):= {\small \left[\begin{matrix}
\mathrm W_f(s) & \phantom{+} & \left[\tfrac{1}{2}{\rm I} + \mathrm K_f^\prime(s)\right]  & \phantom{+} & 0 & \phantom{+} & 0 & \phantom{+} & 0 & \phantom{+}\\[1.1ex]
\left[\tfrac{1}{2} {\rm I} + \mathrm K_f(s)\right] & \phantom{+} & -{\rm V}_f(s) & \phantom{+} & 0 & \phantom{+} & 0 & \phantom{+} & -\rm I & \phantom{+}\\[1.1ex]
0 & \phantom{+} & 0 & \phantom{+} & \rho_f^{-1}\bs{\mathrm W}_{\Sigma}(s) & \phantom{+} & \rho_f^{-1}\left[\tfrac{1}{2}{\bf I} + \bs{\mathrm K}^\prime_{\Sigma}(s)\right] & \phantom{+} & s\bf N^\prime& \phantom{+}\\[1.1ex]
0 & \phantom{+} & 0 & \phantom{+} & \rho_f^{-1}\left[\tfrac{1}{2}{\bf I} + {\bf K}_\Sigma(s)\right]  & \phantom{+} & -\rho_f^{-1}{\bf V}_\Sigma(s) & \phantom{+} & 0 & \phantom{+}\\[1.1ex]
0 & \phantom{+} & -\rm I & \phantom{+} & s\bf N & \phantom{+} & 0 & \phantom{+} & 0 & \phantom{+}
\end{matrix}\right]}
\]
defined implicitly by the left and right hand side of \eqref{eq:BIE}.

With the notation defined above, the weak formulation of the problem is:
\begin{alignat}{6}
\nonumber
&\text{Given data } \; (\phi_0,\lambda_0)\in {\rm H}^{1/2}(\Gamma)\times{\rm H}^{-1/2}(\Gamma), \text{ find } \;(\gamma^+ v,\partial_\nu^+ v,\gamma^-\bs u,\bs{\sigma\nu}^-,\eta)\in \mathbb H_h \; \text{ such that } \\[1ex]
\label{eq:weakBIE}
&\langle\,\mathbb M(s)(\,\gamma^+ v,\partial_\nu^+ v,\gamma^-\bs u,\bs{\sigma\nu}^-,\eta),(\phi,\lambda,\bs\phi,\bs\lambda,\widetilde\eta)\rangle_\Gamma = -\langle(0,0,s{\bf N}\phi_0,\bs 0,\lambda_0),(\phi,\lambda,\bs\phi,\bs\lambda,\widetilde\eta)\rangle_\Gamma \\[1ex]
\nonumber
&\text{for all } \; (\phi,\lambda,\bs\phi,\bs\lambda,\widetilde\eta)\in \mathbb H^\prime_h.
\end{alignat}
%

\noindent\textbf{Invertibility of the weak system.}  The acoustic and elastic Calder\'on projectors are defined respectively by
\begin{equation}\label{eq:twoprojectors}
\mathbf C_f(s) := \begin{bmatrix} 
\mathrm W_f(s) & \phantom{,} & \tfrac{1}{2}{\rm I} + \mathrm K_f^\prime(s) \\[1.1ex]
\tfrac{1}{2} {\rm I} + \mathrm K_f(s) & \phantom{,} & -{\rm V}_f(s)
\end{bmatrix}
\qquad \text{ and } \qquad
\mathbf C_\Sigma(s) := \begin{bmatrix}
\bs{\mathrm W}_{\Sigma}(s) & \phantom{,} & \tfrac{1}{2}{\bf I} + \bs{\mathrm K}^\prime_{\Sigma}(s)\\[1.1ex]
\tfrac{1}{2}{\bf I} + {\bf K}_\Sigma(s)  & \phantom{,} & -{\bf V}_\Sigma(s) 
\end{bmatrix}.
\end{equation}
The continuity and strong ellipticity of these operators on Lipschitz domains was established by Costabel, and Costabel and Stephan in the seminal articles \cite{Costabel:1988,CoSt:1990}. In the case of the elastic projector, uniqueness can only be guaranteed away from the eigenvalues of the Navier--Lam\'e operator---commonly known as \textit{Jones frequencies} \cite{Jones:1983}. Although it was widely believed---following a result by Harg\'e \cite{harge1990}---that these frequencies ``could not exist for arbitrary bodies", it has been recently established that such modes do indeed exist for Lipschitz domains \cite{DoNiSu2019,DoNiOv2022}.

\begin{theorem}
The system \eqref{eq:weakBIE} is uniquely solvable away from Jones frequencies.
\end{theorem}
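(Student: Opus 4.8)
The plan is to show that the bilinear form induced by $\mathbb M(s)$ is coercive on the product space $\mathbb H$ and then to invoke Lax--Milgram. The reason to aim for coercivity rather than a mere inf--sup condition is exactly the observation already made in the text: coercivity is inherited by every closed subspace, so proving it on a generic $\mathbb H_h$ simultaneously delivers the discrete Galerkin stability and, upon setting $\mathbb H_h=\mathbb H$, the well-posedness of the continuous problem. Since $\mathbb M(s)$ is a complex--symmetric operator (the operators $\mathrm W_f,\mathrm V_f,\bs{\mathrm W}_\Sigma,\bs{\mathrm V}_\Sigma$ are symmetric, the pairs $\mathrm K_f,\mathrm K_f^\prime$ and $\bs{\mathrm K}_\Sigma,\bs{\mathrm K}^\prime_\Sigma$ are mutually adjoint, and $\mathbf N,\mathbf N^\prime$ are adjoint), coercivity is naturally tested against conjugates: I would test \eqref{eq:weakBIE} with $\mathsf T\overline{\mathbf x}$, where $\mathbf x=(\gamma^+v,\partial_\nu^+v,\gamma^-\bs u,\bs{\sigma\nu}^-,\eta)$ is the unknown and $\mathsf T$ is a block--diagonal operator carrying signs, the factor $\rho_f$, and powers of $s$ on each component. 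Because $\mathsf T$ acts componentwise it preserves the product structure $\mathbb H_h={\rm Y}_h\times{\rm X}_h\times{\bf Y}_h\times{\bf X}_h\times{\rm Y}_h$, so the resulting coercivity transfers to subspaces.

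The two diagonal $2\times 2$ blocks of $\mathbb M(s)$ are precisely the acoustic and elastic Calder\'on projectors $\mathbf C_f(s)$ and $\mathbf C_\Sigma(s)$ of \eqref{eq:twoprojectors}, whose strong ellipticity on Lipschitz boundaries is the Costabel and Costabel--Stephan result quoted above. First I would recall the associated energy identities: representing the densities through the potentials \eqref{eq:IntegralRepresentationF} and \eqref{eq:IntegralRepresentationS}, Green's formula turns the $\bar s$--weighted conjugated acoustic block into the positive quantity $\|\nabla v\|^2_{\Omega_+}+|s/c|^2\|v\|^2_{\Omega_+}$ and the elastic block into $a_\Sigma(\bs u,\bs u)+\rho_\Sigma|s|^2\|\bs u\|^2_{\Omega_-}$, the latter being definite thanks to the positivity hypothesis on $\bf C$. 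Taking real parts after multiplication by $\bar s$ then yields lower bounds of the form $C\,\tfrac{\mathrm{Re}\,s}{|s|}\|\cdot\|^2$ on each block.

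The crux is the coupling. The off-diagonal entries ($-\rm I$ in rows $2$ and $5$, together with the $s\bf N,\,s\bf N^\prime$ linking $\eta$ to the elastic Dirichlet slot) form a symmetric, sign-indefinite block, and the mortar variable sits on a vanishing $(5,5)$ diagonal, so positivity cannot be read off the block structure alone. My plan is to fix the signs and the $s$--powers in $\mathsf T$ so that, using the adjunction $\langle\mathbf N^\prime\eta,\bs\varphi\rangle_\Gamma=\langle\eta,\mathbf N\bs\varphi\rangle_\Gamma$ and the fact that the interface pairings $\langle\partial_\nu^+v,\gamma^+\bar v\rangle_\Gamma$ and $\langle\bs{\sigma\nu}^-,\gamma^-\bar{\bs u}\rangle_\Gamma$ produced by the Calder\'on blocks are exactly the traces entering the transmission conditions \eqref{eq:LDequationsC}--\eqref{eq:LDequationsD}, the coupling terms recombine with those pairings into the energy of the coupled transmission problem. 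In other words, the mortar coupling is engineered so that the two Calder\'on energies and the coupling add up to a single coercive quadratic form. The delicate part, which I expect to absorb most of the work, is the exact bookkeeping of conjugates and factors of $s$ that makes the indefinite cross-terms cancel while leaving both diagonal energies with a definite sign.

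Finally I would settle uniqueness and the Jones-frequency caveat. The coercivity constant produced above is strictly positive for every $s\in\mathbb C_+$, so Lax--Milgram gives existence and uniqueness on each $\mathbb H_h$ at once. Injectivity can also be read off Proposition \ref{prop:equivalence}: a homogeneous solution of \eqref{eq:BIE} corresponds to a solution of the homogeneous transmission problem, and testing the interior elastic equation with $\bar s\,\bar{\bs u}$ and the exterior acoustic equation with $\bar s\,\bar v$, then adding and taking real parts, forces $\bs u=\bs 0$ and $v=0$ whenever $\mathrm{Re}\,s>0$. This is precisely the statement that no Jones mode can live in the open right half-plane, so the restriction to frequencies away from the Jones set becomes binding only in the frequency-domain limit $\mathrm{Re}\,s\to 0^+$, where the interior elastic resolvent problem may acquire the nontrivial traction-free modes whose existence for Lipschitz domains was recently confirmed in the references cited above.
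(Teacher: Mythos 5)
Your overall plan---coercivity of the form induced by $\mathbb M(s)$ plus Lax--Milgram---cannot work, and the obstruction is precisely the feature you flag but then try to argue away: the vanishing $(5,5)$ diagonal block. Take $\mathbf x=(0,0,\bs 0,\bs 0,\eta)$ with $\eta\neq 0$. Then
\[
\mathbb M(s)\mathbf x = \left(0,\,-\eta,\, s\mathbf N^\prime\eta,\, \bs 0,\, 0\right),
\]
whose \emph{fifth} component vanishes, while any componentwise (block--diagonal) $\mathsf T$ gives $\mathsf T\overline{\mathbf x}=(0,0,\bs 0,\bs 0,t_5\bar\eta)$, which is nonzero only in the fifth slot. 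The product duality pairing of these two vectors is therefore exactly zero: the quadratic form vanishes identically on the whole mortar subspace, for \emph{every} admissible choice of signs, powers of $s$ and scalings in $\mathsf T$. The coupling entries are linear in $\eta$, so no ``recombination with the transmission pairings'' can manufacture the missing $\|\eta\|^2_{1/2,\Gamma}$ term; coercivity (and even T--coercivity with a structure-preserving $\mathsf T$, which is what your subspace-inheritance argument requires) is impossible. This also undercuts your motivation: since the system is a genuine saddle point, Galerkin stability on arbitrary closed subspaces is \emph{not} automatic---it requires discrete inf--sup compatibility between the spaces---so the dividend you hoped to collect from coercivity does not exist.

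The paper's proof takes the route that this structure forces. It groups the two Calder\'on projectors into $\mathbb A(s)=\operatorname{diag}\left(\mathbf C_f(s),\rho_f^{-1}\mathbf C_\Sigma(s)\right)$ and the mortar row into $\mathbb B(s)=\begin{bmatrix} 0 & -\mathrm I & s\mathbf N & 0\end{bmatrix}$, so that
\[
\mathbb M(s)=\begin{bmatrix}\mathbb A(s) & \mathbb B(s)^\top\\ \mathbb B(s) & 0\end{bmatrix}
\]
is a saddle--point operator; it then quotes \cite{BaDjEs:2014} for the invertibility of $\mathbb A(s)$ away from Jones frequencies, observes that $\mathbb B(s)$ is surjective (the identity acting on the $\partial_\nu^+v$ slot already suffices), and invokes the Babu\v{s}ka--Brezzi theory \cite[Lemma 2.1 and Theorem 2.1]{Gatica2014} to conclude invertibility of $\mathbb M(s)$. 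Your closing energy argument (no Jones modes when $\mathrm{Re}\,s>0$) is a sound and interesting observation, but in infinite dimensions it delivers only injectivity of $\mathbb M(s)$, not bounded invertibility; in the paper the Jones caveat enters through the quoted invertibility result for the block $\mathbb A(s)$, not through an energy identity for the coupled problem.
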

\begin{proof}
Defining the matrices
$\;{\displaystyle \mathbb A(s) : = \begin{bmatrix} \mathbf C_f(s) & \phantom{,} &\bs 0 \\[1.1ex] \bs 0 & \phantom{,} & \rho_f^{-1}\mathbf C_\Sigma(s)\end{bmatrix}}$\; and\; ${\displaystyle
\mathbb B(s):= \begin{bmatrix}\; 0 & \phantom{,} & -\rm{I} & \phantom{,} & s\mathbf N & \phantom{,} & 0 \;\;\end{bmatrix}}$, we see that the system matrix $\mathbb M(s)$ has the saddle point structure
\[
\mathbb M(s) = \begin{bmatrix}
\; \mathbb A(s) & \phantom{+} & \mathbb B(s)^\top \\[1.1ex]
\mathbb B(s) & \phantom{+} & 0 
\end{bmatrix}.
\]
Since the diagonal blocks of $\mathbb M(s)$ are the acoustic and elastic Calder\'on projectors---which are bounded---and the normal vector operator $\bf N$ appearing in $\mathbb B(s)$ is bounded, it follows that $\mathbb M(s): \mathbb H_\Gamma \to \mathbb H_\Gamma^\prime$ is also bounded. The invertibility of the operator $\mathbb A(s)$ on Lipschitz domains, modulo Jones frequencies, was established in \cite{BaDjEs:2014} via an equivalent boundary value problem and, since $\mathbb B(s)$ is easily seen to be surjective, it then follows (see for instance \cite[Lemma 2.1 and Theorem 2.1]{Gatica2014}) that $\mathbb M(s)$ is invertible away from Jones frequencies.
\end{proof}
%
\textbf{Return to the time domain.} 
%
In the conforming setting, as laid out in the seminal article by Laliena and Sayas \cite{LaSa2009}, the standard process is to prove the solvability of the Laplace--domain system by recasting the boundary integral equations as a PDE problem with non--standard transmission conditions. This problem is then studied using a Lax-Milgram argument which yields stability bounds with explicit dependence on the Laplace parameter $s$ and its real part $\mathrm{Re}(s)$. Using the inversion theorem for the Laplace transform (\cite{DoSa2013} and \cite[Proposition 3.2.2]{Sayas2016errata,Sayas2016}), these explicit bounds can then be translated into time domain results that:
\begin{enumerate}
\item prescribe regularity conditions (in time) on the problem data that guarantee solvability of the time domain system, and
\item provide explicit polynomial--in--time bounds for the growth of the solutions---as opposed to the exponential--type bounds obtained through the use of Gronwall--style estimates. 
\end{enumerate}
Unfortunately, our use of \cite[Lemma 2.1 and Theorem 2.1]{Gatica2014} to prove the solvability of the saddle--point weak system \eqref{eq:weakBIE} does not yield an explicit expression for the stability constant. Thus, we can not exploit the machinery from \cite[Proposition 3.2.2]{Sayas2016errata,Sayas2016} to obtain time--domain results. However, due to the fact that the symmetric system \eqref{eq:BIE} that we analyze in this communication is equivalent to the one analyzed in \cite[Section 2.7]{HsSaSa2016} (in the sense of being able to recover the solution of either one of the systems from the solution to the other one), it is reasonable to expect time behavior analogous to \cite[Corollary 3.9]{HsSaSa2016} for the solution to the continuous problem, namely:

\begin{proposition}\label{cor:3.9}
Let the problem data $(\lambda_0,\phi_0)$ belong to the Sobolev spaces
\begin{align*}
\lambda_0 \in\,& \{ \xi\in \mathcal C^{2}(\mathbb R;{\rm H}^{-1/2}(\Gamma))\,:\, \xi \equiv 0 \text{ in } (-\infty,0), \xi^{(3)}\in L^1(\mathbb R;{\rm H}^{-1/2}(\Gamma))\},\\
\phi_0\in\,& \{ \varphi \in \mathcal C^{3}(\mathbb R;{\rm H}^{1/2}(\Gamma))\,:\, \varphi \equiv 0 \text{ in } (-\infty,0), \varphi^{(4)}\in L^1(\mathbb R;{\rm H}^{1/2}(\Gamma))\},
\end{align*}
where the notation $\phi^{(n)}$ is used to denote the $n$--th derivative of $\phi$ with respect to time. Then, the densities $\gamma^-\bs u, \boldsymbol\sigma\boldsymbol n^-,\gamma^+v$, and $\partial_\nu^+ v$ along with the solutions $(\mathbf{u},v)$ to the PDE system obtained from \eqref{eq:IntegralRepresentationF} and \eqref{eq:IntegralRepresentationS},  are continuous causal functions of time for all $t\ge 0$. Moreover, the post processed solutions satisfy the estimate
\[
\|(\boldsymbol{u},v)(t)\|_{1,\mathbb{R}^d\setminus\Gamma} 
\leq \frac{D_1t^2}{t+1}\max\{1,t^2\}\int_0^t\|\mathcal{P}_{3}(\lambda_0,\dot\phi_0)(\tau)\|_{-1/2,1/2,\Gamma}\;d\tau, 
\]
where $D_1$ depends only on the interface $\Gamma$ and 
\[
(\mathcal{P}_{k}f)(t) := \displaystyle\sum_{l=0}^{k} {k\choose l} f^{(l)}(t).
\]
\end{proposition}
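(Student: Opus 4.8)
The plan is to avoid proving a quantitative Laplace-domain stability estimate for the symmetric system \eqref{eq:BIE} directly---which, as noted above, is out of reach through the Gatica-type saddle-point argument \cite{Gatica2014}---and instead to transport the estimate already available in \cite{HsSaSa2016} across the equivalence of the two formulations. The starting observation is that, by Proposition \ref{prop:equivalence} together with the corresponding equivalence proved in \cite[Section 2.7]{HsSaSa2016}, both systems of boundary integral equations are equivalent to the \emph{same} Laplace-domain transmission problem \eqref{eq:LDequations}. Consequently the post-processed pair $(\bs u,v)$ recovered from \eqref{eq:IntegralRepresentationF} and \eqref{eq:IntegralRepresentationS} is independent of which formulation is used to compute the densities: it is the unique (away from Jones frequencies) solution of \eqref{eq:LDequations}, and the boundary densities are its traces. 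Hence any stability bound valid for that solution holds verbatim for the post-processed fields and densities of the present formulation.

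First I would record the Laplace-domain estimate for \eqref{eq:LDequations} in the explicit form underlying \cite[Corollary 3.9]{HsSaSa2016}, namely a bound of the shape
\[
\| (\bs u, v)(s)\|_{1,\mathbb R^d\setminus\Gamma} \;\le\; C_\Gamma\, \frac{|s|^{\alpha}}{\min\{1,\mathrm{Re}\,s\}^{\beta}}\, \big\| (\lambda_0,\, s\,\phi_0)(s)\big\|_{-1/2,1/2,\Gamma},
\]
with $C_\Gamma$ depending only on $\Gamma$ and the powers $\alpha,\beta$ tracked precisely. Such a bound follows from the Laliena--Sayas route \cite{LaSa2009} used in \cite{HsSaSa2016}: recast \eqref{eq:LDequations} as a variational problem on $\mathbf H^1(\Omega_-)\times\mathrm H^1(\Omega_+)$, run a coercivity/Lax--Milgram argument, and retain explicit dependence on $s$ and $\mathrm{Re}\,s$. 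The factor $s\,\phi_0$ in the data norm reflects the term $\rho_f s\,\gamma^+v$ in the transmission condition \eqref{eq:LDequationsC} and explains the presence of $\dot\phi_0$ in the final estimate.

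Next I would pass to the time domain through the inversion theorem for Laplace transforms of causal operator-valued distributions \cite{DoSa2013} and \cite[Proposition 3.2.2]{Sayas2016errata,Sayas2016}. The hypotheses on $(\lambda_0,\phi_0)$---causality together with the prescribed number of continuous time derivatives and the integrability of the top derivative---are exactly those required for the polynomial-in-$|s|$, rational-in-$\mathrm{Re}\,s$ growth of the symbol above to be admissible. The inversion theorem then returns a causal, time-continuous solution: each power of $|s|$ is absorbed by one time derivative of the data, accounting for the operator $\mathcal P_3$ acting on $(\lambda_0,\dot\phi_0)$, while the negative powers of $\mathrm{Re}\,s$ produce the time-integration $\int_0^t$ and the prefactor $\tfrac{t^2}{t+1}\max\{1,t^2\}$. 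For the densities $\gamma^-\bs u$, $\bs{\sigma\nu}^-$, $\gamma^+v$, $\partial_\nu^+v$ and the mortar variable $\eta=\gamma^+v$, I would apply the bounded trace and conormal-derivative operators to the Laplace-domain solution, fold their at-most-polynomial-in-$s$ norms into the symbol, and invoke the same inversion theorem to conclude that they, too, are continuous causal functions of time.

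The main obstacle is not the inversion step, which is standard once an explicit symbol bound is in hand, but the justification that the equivalence between \eqref{eq:BIE} and the formulation of \cite{HsSaSa2016} is \emph{quantitatively} faithful: one must verify that the symmetrization and the introduction of the mortar variable $\eta=\gamma^+v$ do not degrade the $s$-dependence, so that the imported estimate applies with the stated powers. Because the post-processed $(\bs u,v)$ literally coincides in both formulations, this reduces to checking that the two solution maps differ only by the bounded normal operators $\mathbf N$, $\mathbf N'$ and by trace operators with controlled $s$-dependence---which is precisely what allows the heuristic ``reasonable to expect'' in the statement to be upgraded to a genuine, if imported, bound.
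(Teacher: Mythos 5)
Your proposal is correct and takes essentially the same route as the paper, which in fact states this proposition \emph{without} a formal proof, justifying it exactly as you do: by the equivalence of the symmetric system \eqref{eq:BIE} with the formulation of \cite{HsSaSa2016} (Proposition \ref{prop:equivalence} plus \cite[Section 2.7]{HsSaSa2016}), so that the bound of \cite[Corollary 3.9]{HsSaSa2016} is transported to the present densities and post-processed fields. Your write-up merely makes the paper's heuristic explicit---noting that uniqueness of the Laplace-domain transmission problem \eqref{eq:LDequations} for $\mathrm{Re}\,s>0$ forces the post-processed $(\bs u,v)$ to coincide in both formulations, then invoking the Laliena--Sayas type symbol bounds and the inversion theorem of \cite[Proposition 3.2.2]{Sayas2016}---which is precisely the argument the paper gestures at when it says the result is ``reasonable to expect.''
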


In a similar vein, if we denote by $(\mathbf{e}^h,e^h):=(\boldsymbol{u}-\boldsymbol{u}^h,v-v^h)$ respectively the errors in the reconstructions of the elastic and acoustic fields $\boldsymbol u$ and $v$ resulting from a time semidiscretization of the system \eqref{eq:weakBIE}, we can expect a result simlar to \cite[Corollary 3.10]{HsSaSa2016}. Specifically:

\begin{proposition}\label{cor:3.10}
If the exact densities $\gamma^-\bs u$ and $\gamma^+ v$ are such that
\begin{align*}
\gamma^-\bs u\in\,& \{ \boldsymbol\varphi \in \mathcal C^{4}(\mathbb R;\mathbf H^{1/2}(\Gamma))\,:\, \boldsymbol\varphi \equiv 0 \text{ in } (-\infty,0), \boldsymbol\varphi^{(4)}\in \boldsymbol L^1(\mathbb R;\mathbf H^{1/2}(\Gamma))\},\\
 \gamma^+ v\in\,& \{ \varphi \in \mathcal C^{4}(\mathbb R;{\rm H}^{1/2}(\Gamma))\,:\, \varphi \equiv 0 \text{ in } (-\infty,0), \varphi^{(4)}\in L^1(\mathbb R; {\rm H}^{1/2}(\Gamma))\},
 \end{align*}
then $(\mathbf{e}^h,e^h)\in\mathcal{C}(\mathbb{R},\mathbf{H}^1(\mathbb{R}^d\setminus\Gamma)\times H^1(\mathbb{R}^d\setminus\Gamma))$  and for all $t\ge 0$ we have the bound
\[
\|(\mathbf{e}^h,e^h)(t)\|_{1,\mathbb{R}^d\setminus\Gamma} \leq \frac{D_2t^2}{t+1}\max\{1,t^2\}\int_0^t\|\mathcal{P}_{4}(\gamma^-\bs u-\boldsymbol\Pi_h\gamma^-\bs u,\gamma^+ v -\Pi_h\gamma^+ v)(\tau)\|_{1/2,\Gamma}\;d\tau,
\]
where $\boldsymbol\Pi_h$ and $\Pi_h$ are the best approximation operators in $\mathbf Y_h$ and $Y_h$, and $D_2$ depends only on $\Gamma$.
\end{proposition}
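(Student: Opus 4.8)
The plan is to carry out the Laliena--Sayas program \cite{LaSa2009,Sayas2016} for the discrete saddle--point system obtained by restricting \eqref{eq:weakBIE} to the product subspace $\mathbb H_h$: first establish, in the Laplace domain and for each $s\in\mathbb C_+$, a Galerkin quasi--optimality estimate for $\mathbb M(s)$ with \emph{explicit} dependence on $|s|$ and $\mathrm{Re}(s)$; then bound the post--processing that turns trace errors into field errors, again tracking the $s$--dependence; and finally invert the Laplace transform to land in the time domain. Because the reconstructed pair $(\mathbf u,v)$ produced from \eqref{eq:IntegralRepresentationF}--\eqref{eq:IntegralRepresentationS} coincides, by Proposition \ref{prop:equivalence}, with the one produced by the formulation of \cite[Section 2.7]{HsSaSa2016}, the explicit frequency--domain bounds---and hence the number of time derivatives and the polynomial--in--$t$ rate---are expected to be inherited from \cite[Corollary 3.10]{HsSaSa2016}; the role of the argument below is to show that the symmetric system \eqref{eq:BIE} reproduces the same estimate in terms of the Dirichlet traces $\gamma^-\bs u$ and $\gamma^+ v$.

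For the quasi--optimality step I would exploit the block structure $\mathbb M(s)=\left[\begin{smallmatrix}\mathbb A(s) & \mathbb B(s)^\top \\ \mathbb B(s) & 0\end{smallmatrix}\right]$ exhibited in the proof of the invertibility theorem. The diagonal operator $\mathbb A(s)=\mathrm{diag}(\mathbf C_f(s),\rho_f^{-1}\mathbf C_\Sigma(s))$ is built from the two Calder\'on projectors, which are strongly elliptic on Lipschitz boundaries by \cite{Costabel:1988,CoSt:1990}; combined with the surjectivity of $\mathbb B(s)$ this yields a discrete inf--sup condition, provided the subspaces $\mathbf Y_h,\mathbf X_h,\mathrm Y_h,\mathrm X_h$ are compatible so that $\mathbb B(s)$ remains surjective on $\mathbb H_h$. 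Tracking the constants in the G\aa rding inequalities for the projectors and in the mapping properties of $s\mathbf N$ produces a stability constant of the form $C|s|^{a}\,\sigma(s)^{-b}$ with $\sigma(s):=\min\{1,\mathrm{Re}(s)\}$, and hence a C\'ea estimate bounding the discrete error by $C(s)$ times the best approximation error of the full density vector in $\mathbb H_h$.

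The next step is to reduce that best--approximation term to the Dirichlet components alone and to transport it to the fields. Using the representation formulas \eqref{eq:IntegralRepresentationF} and \eqref{eq:IntegralRepresentationS}, the field errors $(\mathbf e^h,e^h)$ are layer potentials of the trace errors; invoking the explicit Laplace--domain continuity bounds for $\mathrm S_f(s),\mathrm D_f(s),\bs{\mathrm S}_\Sigma(s),\bs{\mathrm D}_\Sigma(s)$ lets me write $\|(\mathbf e^h,e^h)(s)\|_{1,\mathbb R^d\setminus\Gamma}$ as $C|s|^{\alpha}\sigma(s)^{-\beta}$ times the best approximation errors of $\gamma^-\bs u$ and $\gamma^+ v$ in $\mathbf Y_h$ and $\mathrm Y_h$ measured in the $\tfrac12$--norm, once I show that the Neumann and mortar contributions are absorbed into the Dirichlet ones through the coupling rows of \eqref{eq:BIE}. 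With this frequency--domain estimate in hand I would apply the inversion theorem \cite{DoSa2013} and \cite[Proposition 3.2.2]{Sayas2016errata,Sayas2016}: the total power of $|s|$ fixes the derivative count, giving the $\mathcal C^4$ regularity and the $L^1$ hypothesis on the fourth derivative, while the negative powers of $\sigma(s)$ generate the causal kernel whose growth is the stated factor $\tfrac{D_2t^2}{t+1}\max\{1,t^2\}$ together with the Taylor--type operator $\mathcal P_4$ inside the time integral.

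The main obstacle is precisely the one flagged after the invertibility theorem: the saddle--point argument through \cite[Lemma 2.1 and Theorem 2.1]{Gatica2014} certifies invertibility but not an explicit stability constant, and the inversion theorem cannot be applied without one. I would resolve this in one of two ways. The direct route is to replace the abstract argument by a quantitative one, deriving the inf--sup constant of $\mathbb M(s)$ from the G\aa rding constants of $\mathbf C_f(s)$, $\mathbf C_\Sigma(s)$ and the norm of $s\mathbf N$ with their known $s$--behavior; this is the genuinely technical part, since it requires sharp exponents rather than mere boundedness. The shortcut, consistent with the ``reasonable to expect'' phrasing of Proposition \ref{cor:3.9}, is to transfer the estimate through the equivalence of Proposition \ref{prop:equivalence}: since the post--processed fields are identical to those of \cite{HsSaSa2016}, the already--established bound of \cite[Corollary 3.10]{HsSaSa2016} applies to $(\mathbf e^h,e^h)$, and only the identification of the approximation spaces $\mathbf Y_h,\mathrm Y_h$ with their counterparts there must be checked.
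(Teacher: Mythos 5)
Your proposal actually goes further than the paper does, because the paper offers no proof of this proposition at all. As the discussion preceding Proposition \ref{cor:3.9} explains, the invertibility of $\mathbb M(s)$ was obtained via the abstract saddle--point result \cite[Lemma 2.1 and Theorem 2.1]{Gatica2014}, which gives no explicit dependence of the stability constant on $s$, so the Laplace--inversion machinery of \cite[Proposition 3.2.2]{Sayas2016errata,Sayas2016} cannot be invoked; both Proposition \ref{cor:3.9} and Proposition \ref{cor:3.10} are therefore stated only as results that are ``reasonable to expect'' by analogy with \cite[Corollaries 3.9 and 3.10]{HsSaSa2016}, on the strength of the equivalence with the formulation analyzed there. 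Your ``shortcut'' is precisely this appeal to equivalence, so on that branch you and the paper coincide. Your ``direct route''---re-deriving a quantitative inf--sup constant for $\mathbb M(s)$ with explicit powers of $|s|$ and of $\min\{1,\mathrm{Re}(s)\}^{-1}$ and feeding it to the inversion theorem---is exactly the program the paper explicitly declines to carry out; were you to execute it (and you correctly flag the sharp exponents as the genuinely hard part, which your sketch does not supply), you would have proved something the paper does not.

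That said, be careful about presenting the shortcut as a complete proof. Proposition \ref{prop:equivalence} is an equivalence of the \emph{continuous} systems. The quantity bounded in Proposition \ref{cor:3.10} is a Galerkin semidiscretization error, and the Galerkin solution of the five--field symmetric system \eqref{eq:weakBIE} posed over $\mathbb H_h$ is not, in general, the same object as the Galerkin solution of the two--field system of \cite[Section 2.7]{HsSaSa2016}: the unknowns, the test spaces, and the mortar coupling all differ, and discretizations of equivalent continuous formulations need not produce coinciding discrete solutions. Consequently \cite[Corollary 3.10]{HsSaSa2016} does not transfer verbatim, and checking ``only the identification of the approximation spaces'' is not enough---this is exactly why the paper hedges with ``we can expect a result similar to'' rather than claiming a theorem. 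In short: your shortcut reproduces the paper's (non-)proof, including its gap, while your direct route identifies the only path to an actual proof for the symmetric formulation.
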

%
\section{Concluding remarks}
%
\textbf{Ease of implementation.}
An attractive feature of the proposed formulation is that, as opposed to the two--unknown formulation studied in \cite{HsSaSa2016,HsSaWe2015,Sanchez2016}, it does not require the discretization of combined acoustic/elastic operators. This feature, together with the use of convolution quadrature for time evolution, implies that any preexisting boundary integral codes for the stationary acoustic and elastic problems can be used together with a CQ implementation for time--domain wave/structure computations. 

Schematically, the discretization matrix takes the form
\[
\begin{pmatrix} \begin{bmatrix}\mathbf C_f^h(s) & \phantom{0} &\boldsymbol 0 \\ & & \\ \boldsymbol 0 & \phantom{0} &\mathbf C_\Sigma^h(s) \end{bmatrix} & \begin{bmatrix}  \phantom{0} \\ \;\mathbf R^\top \\ \phantom{0} \end{bmatrix} \\[3.5ex]  \begin{bmatrix}\phantom{0} \qquad\;\, \mathbf R \qquad\;\, \phantom{0} \end{bmatrix} & \boldsymbol 0 \end{pmatrix}\,, \qquad \text{ with } \qquad \mathbf{R}:= [\; 0 \quad -\mathrm{I}^h \quad s\mathbf N^h \quad 0 \quad 0 \;] \,,
\]
where $\mathbf C_f^h(s)$ and $\mathbf C_\Sigma^h(s)$ are, respectively the discretizations of the acoustic and elastic Calder\'on projectors appearing in  \eqref{eq:twoprojectors}, each of which can be obtained from independent preexisting solvers. The codes communicate only through the matrix $\mathbf{R}$ that amounts to the mass matrix in the acoustic discretization (for the identity), and a rectangular matrix $\mathbf N^h$ with the $x$ and $y$ components of the exterior unit normal vector on each element of the discretization of the acoustic/elastic interface.

\textbf{Numerical considerations.}
We stress that the central point of this article is to introduce and prove the well--posedness of the formulation leading to the system \eqref{eq:BIE}, rather than to advocate for one particular discretization scheme over another one. On the contrary, the main advantage of the proposed formulation is that it is amenable for discretization with any off--the--shelf acoustic and elastic codes that the reader is familiar and comfortable with---obviating the need for specialized treatment of the coupling terms. The following numerical example is included for the sole purpose of showcasing the fact that the formulation can indeed be implemented. For the sake of completennes, a very brief description of our particular choice of numerical method is included below.

For the spatial discretization, we use {\tt deltaBEM}, a Nystr\"om--flavored discretization for the operators of the acoustic \cite{DoLuSa2014a,DoLuSa2014b} and elastic \cite{DoSaSa2015} Calder\'on projector. The method makes use of look--around quadrature formulas for the approximation of boundary integrals and collocates the discrete equations on symmetrically staggered grids to achieve third--order spatial accuracy. As for the elastic hypersingular operator, the code discretizes a regularization due to Frangi and Novati \cite{Frangi1998} that results in an integro--differential operator. The reader is directed to the aforementioned works for further details and to \cite{deltaBEM} for a freely--available implementation of {\tt deltaBEM} that also includes Convolution Quadrature capabilities.

For the discrete time evolution, we use second--order backwards--differentiation convolution quadrature, typically referred to as BDF2-CQ. Convolution quadrature algorithms combine time--domain problem data, Laplace--domain convolution kernels (typically tamer than their temporal counterparts), and weights associated to a time--stepping method to approximate the convolution integrals appearing in the time--domain layer potentials. A detailed discussion of these methods falls way beyond the scope of this communication; the reader is referred to \cite{HaSa2016} for a detailed and accessible exposition of the theoretical and implementation aspects of CQ. For the purposes of this communication, we shall consider CQ as a blackbox time--stepping mechanism that serves two purposes:
\begin{enumerate}
\item Given \textit{time--domain} boundary data, as appearing in the right hand side of equations \eqref{eq:TDequationsC} and \eqref{eq:TDequationsD}, produces the Laplace--domain boundary data appearing on the right hand side of the system \eqref{eq:BIE}.

\item Given \textit{Laplace--domain} solutions to the BIE system \eqref{eq:BIE}, produces \textit{time--domain} solutions to the system \eqref{eq:TDequations}.
\end{enumerate}

%
\textbf{An acoustic/elastic simulation.}
The following example is included for demonstrative purposes. We consider a kite--shaped elastic body with elastic coefficients 
\[
\rho_\Sigma = 2\times 10^{7} {\rm kg/m^2}\,,\quad \lambda = 5\times 10^{6} {\rm Pa}, \quad \text{ and } \quad \mu = 4\times 10^{6} {\rm Pa},
\]
sorrounded by an irrotational fluid of density $\rho_f=1 {\rm kg/m^2}$ and wave speed $c=1\,{\rm m./s.}$. The elastic parameters yield transversal and logitudinal wave speeds of $c_T \approx 0.45\,\,{\rm m/s}$ and $c_L\approx 0.81\, \,{\rm m/s}$ respectively. These parameter values are not meant to replicate any particular media; they were chosen to obtain comparable wave speeds in both acoustic and elastic media. This allows both acoustic and elastic waves to be visualized on the same time scale in the numerical experiment. The solid is initially at rest, and is hit by an acoustic wave pulse of the form
\[
v^{inc} = 2W(t)\sin^3(2t)\,,
\]
propagating in the direction $\mathbf d = (1,1)$. The windowing function $W(t)$ is a $\mathcal C^6$ approximation to the characteristic function of the interval $[0,\pi/2]$. The simulation was computed using 600 discretization points for the boundary of the kite, and a second--order backward Euler--based Convolution Quadrature formula (the reader is referred to \cite{HaSa2016} for theoretical and implementation details pertaining CQ in general and BDF2--CQ in particular) with a time step of size $\Delta t = 2.5\times 10^{-3}$. Figure \ref{fig:Simulation} features snapshots of the interaction at different times. For the elastic field (inside the kite--shaped scatterer) the magnitude of the elastic displacement is plotted on a \textit{heatmap} color scheme, where black represents no displacement and gradually brighter shades of orange and yellow denote increasing values of the magnitude. A similar gray--scale color scheme is used for the acoustic pressure. The simulation accurately predicts that the incident wave excites an elastic perturbation that propagates at a slower speed than that of the exterior acoustic wave.

\begin{figure}
\begin{tabular}{ccc}
 {\large $t=0$} & {\large $t=1.2$} & {\large $t=2.4$}\\[2ex] 
\includegraphics[width = 0.3\linewidth]{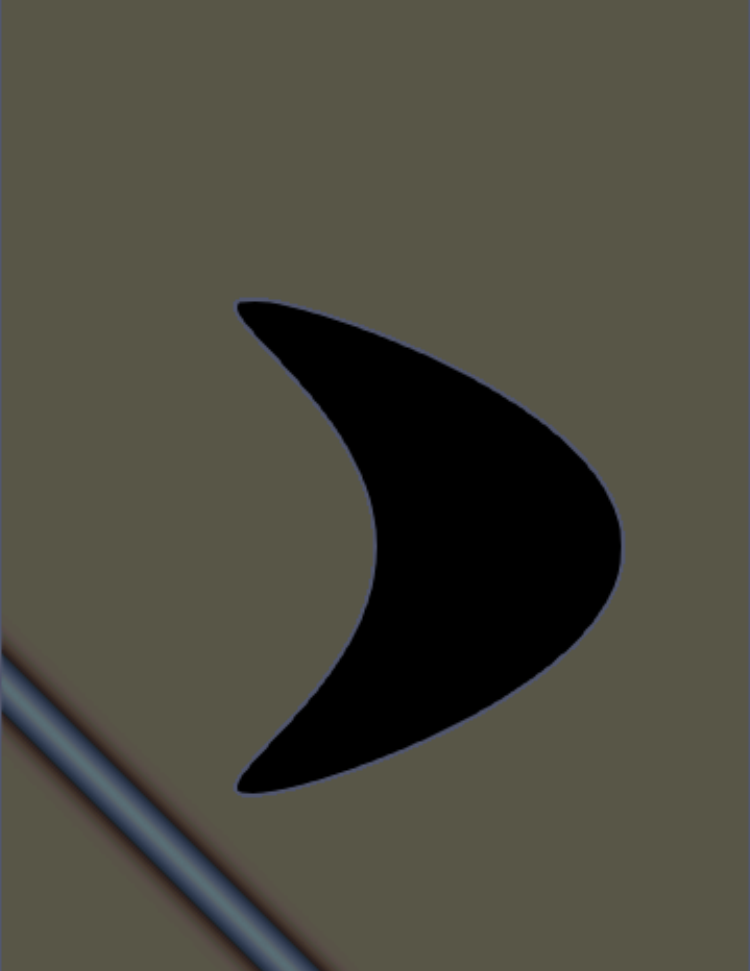} &
\includegraphics[width = 0.3\linewidth]{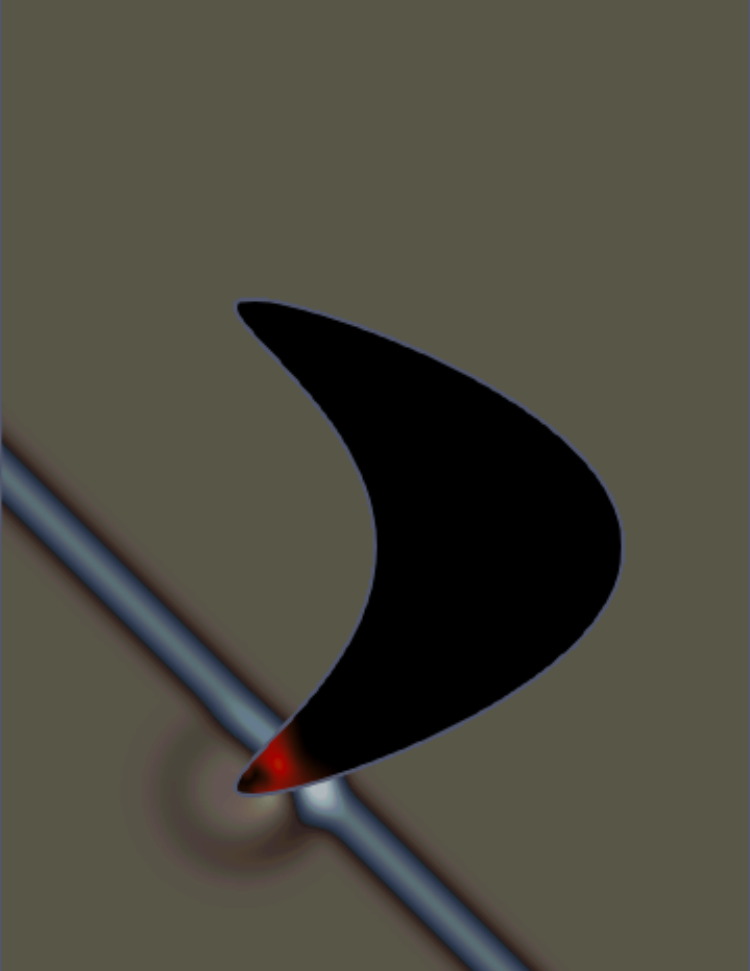} &
\includegraphics[width = 0.3\linewidth]{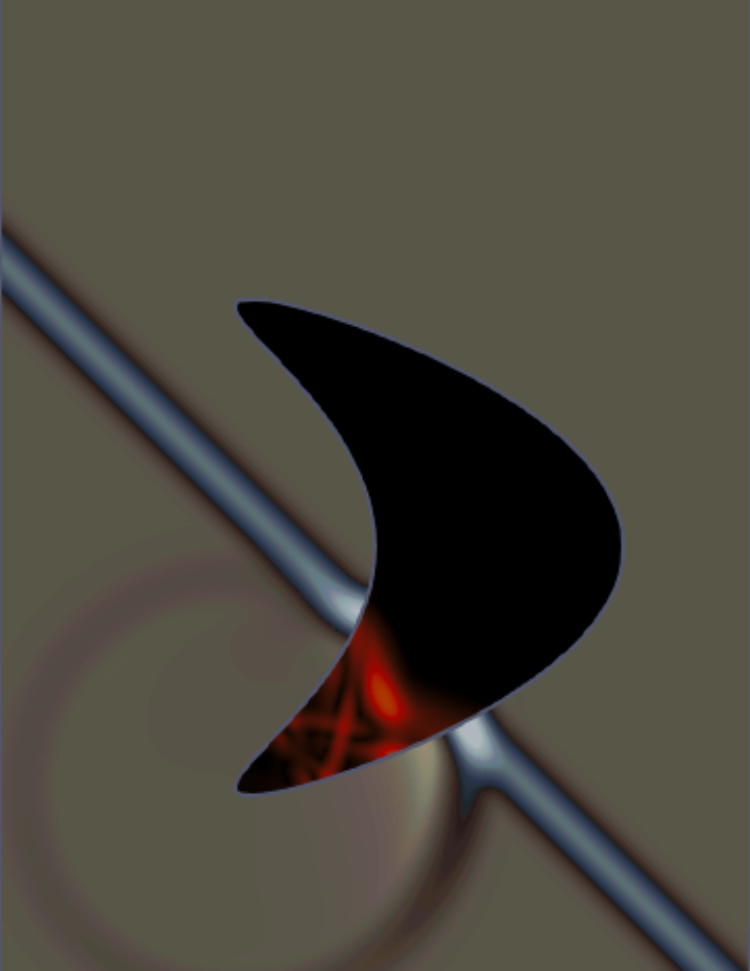} \\[2ex]
{\large $t=3.6$} & {\large $t=4.8$} & {\large $t=6$}\\[2ex]
\includegraphics[width = 0.3\linewidth]{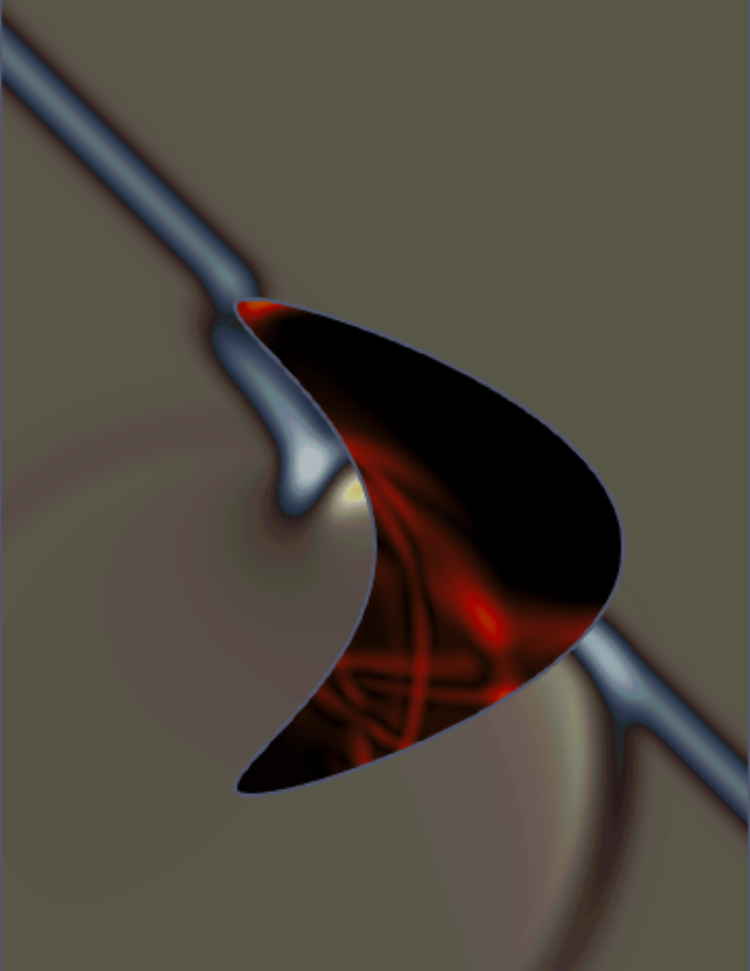} &
\includegraphics[width = 0.3\linewidth]{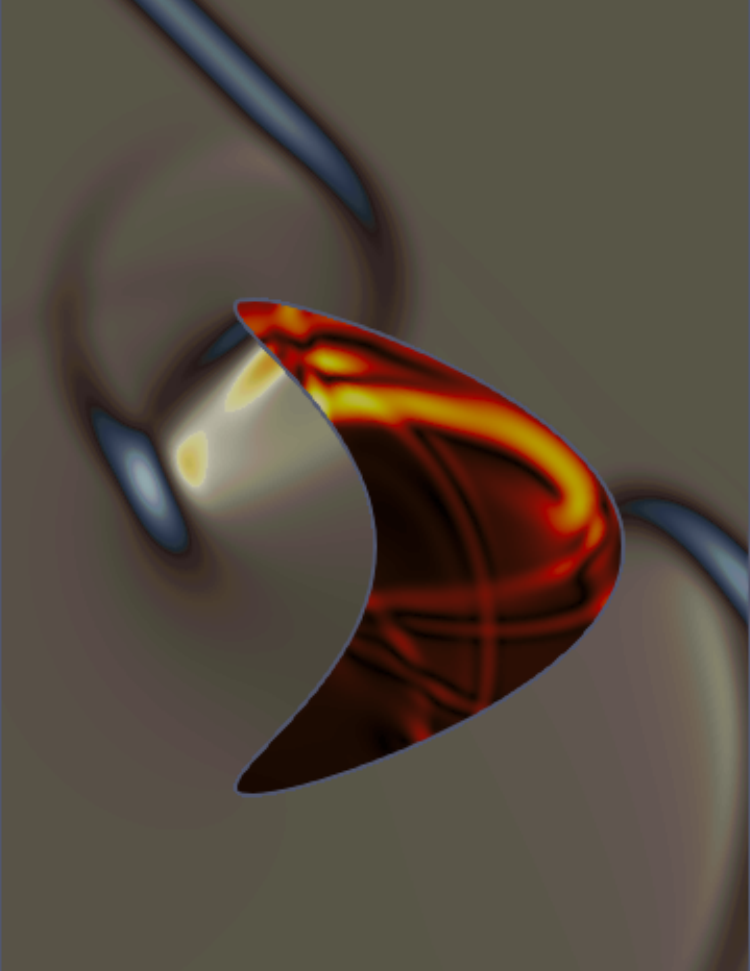} &
\includegraphics[width = 0.3\linewidth]{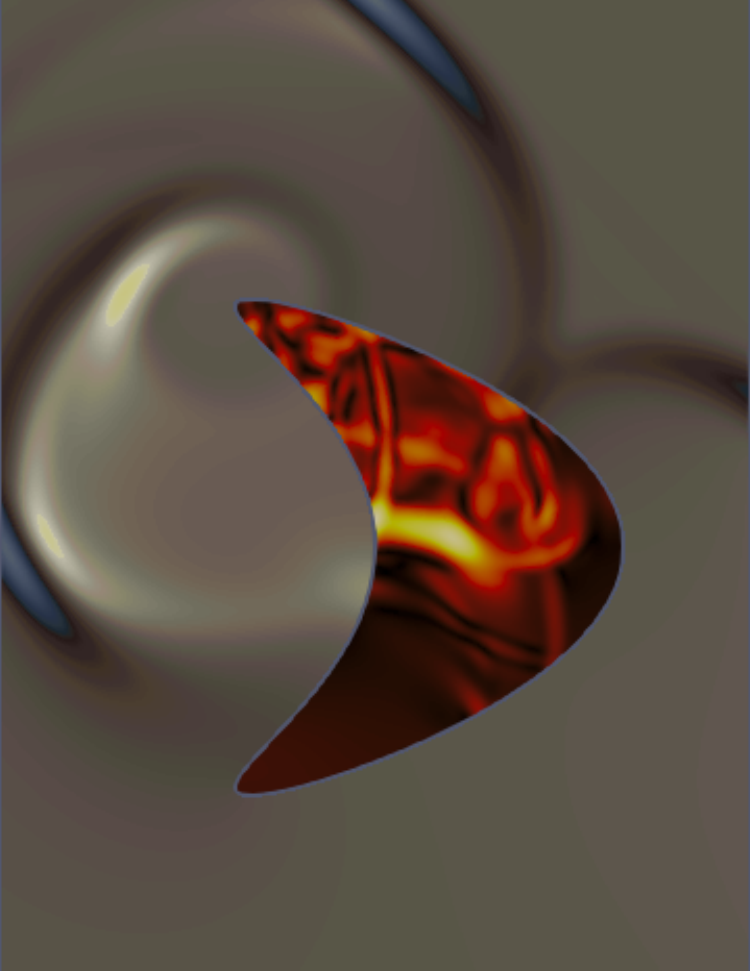}
\end{tabular}
\caption{An acoustic pulse incides on an elastic object. The heatmap in the elastic domain indicates the magnitude of the elastic displacement $\boldsymbol u$.}\label{fig:Simulation}
\end{figure}
%
\section*{Acknowledgements}
%
Tonatiuh S\'anchez-Vizuet has been partially funded by the U. S. National Science Foundation through the grant NSF-DMS-2137305.

%
\clearpage
\bibliographystyle{abbrv}
\bibliography{references}

\end{document}